\newtheorem{thm}{Theorem} 
\newtheorem{coro}{Corollary} \setlength{\parindent}{0cm} \let\paragraph\subsection
\title{Discrete Algebraic sets in Discrete Manifolds}
\author{Oliver Knill}
\date{12/21/2023}
\address{Department of Mathematics \\ Harvard University \\ Cambridge, MA, 02138 }
\subjclass{}
\keywords{Morse-Sard, Discrete Manifolds, Graph Theory}
\begin{document}
\maketitle

\begin{abstract}
A discrete $d$-manifold is a finite simple graph $G=(V,E)$ where all unit spheres are $(d-1)$-spheres. 
A $d$-sphere is a $d$-manifold for which one can remove a vertex to make it contractible. A graph
is contractible if one can remove a vertex with contractible unit sphere to get a contractible graph.
We prove a discrete Morse-Sard theorem: if $G=(V,E)$ is a $d$-manifold and 
$f:V  \to \mathbb{R}^k$ an arbitrary map, then for any $c \notin f(V)$,
a level set $\{ f = c \}$ is always a $(d-k)$-manifold or empty. 
While a priori open sets in the simplicial complex of $G$, they are sub-manifolds
in the Barycentric refinement of $G$. Level sets are orientable if $G$ is orientable. 
Any complex-valued function $\psi$ on a discrete $4$-manifold $M$ 
defines so level surfaces $\{\psi=c\}$ which are except for $c \in f(V)$ always 
$2$-manifolds or empty. 
\end{abstract} 

\section{Introduction}

\paragraph{}
We continue to work on the Morse-Sard theme \cite{Morse1939,Sard1942} 
in a finite graph theoretical setting. Graphs come with a simplicial complex in which the
set of vertex sets $G$ of complete sub-graphs are the elements.
An Alexandrov topology on $G$ \cite{Alexandroff1937} is obtained by taking
the simplicial sub-complexes as closed sets \cite{FiniteTopology}. 
While \cite{KnillSard} looked at one hyper-surface at a time, 
needing the $k$'th Barycentric refinement to define $H=\{ f_1=0,\dots, f_k =0 \}$,
we define here arbitrary discrete co-dimension $k$ varieties in
a discrete $d$- manifold: for all $d$-manifolds $G$ and all $f:G \to \mathbb{R}^k$,
and all $c \notin f(G)$, the discrete algebraic set $H$ is either a $(d-k)$ 
manifold or empty. 

\begin{figure}[!htpb]
\scalebox{1.0}{\includegraphics{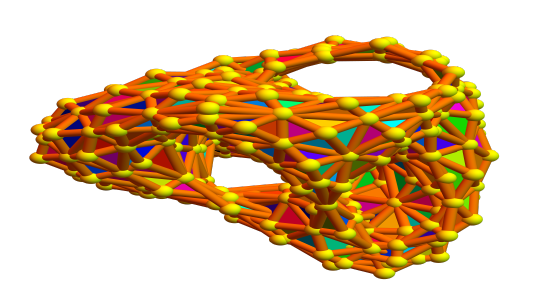}}
\label{rp4}
\caption{
We see a random level surface of a function $f: G=\mathbb{R}\mathbb{P}^4 \to \mathbb{C}$.
The host complex $\mathbb{R}\mathbb{P}^4$ has the f-vector $(16, 120, 330, 375, 150)$,
the Betti vector $(1,0,0,0,0)$ and the Euler characteristic $1$. 
Unlike in an orientable host manifold, the surface $H=\{ f = c\}$ can be 
non-orientable. In this case, we have hit a Klein bottle of genus $2$. It has the f-vector
$(76,174,94)$, Betti vector $(1,5,0)$ and Euler characteristic $-4$. 
}
\end{figure}

\paragraph{}
Our new discrete Morse-Sard statement does not need to impose any conditions on the functions $f_i$.
The theorem holds for the vector space of all functions $V \to \mathbb{R}^k$. 
In the discrete, we have explicit control about the set of regular values. 
In comparison, for manifolds in the continuum, the classical Morse-Sard theorem only implies that
it is a set of zero measure. Consider for example a spin system $f: G \to \{-1,1\}$ on a 
discrete $m$-manifold. The discrete Morse-Sard theorem tells that $\{ f =0 \}$ is either a 
$(m-1)$-manifold or empty. When taking several functions, we can so generate small
models of higher-dimensional manifolds and so get probability spaces of such structures. 
Level sets are given at first as delta sets and are in general much smaller
than their simplicial complex implementations. 
We can visualize the Barycentric refinements as simplicial complexes of graphs. 

\paragraph{}
Among discrete geometry settings, the language of {\bf finite simple graphs} 
is accessible and is a built-in data structure in many computer algebra implementations. The code
section below in this article shows some computer algebra implementation of which illustrates this. 
Also more general structures like simplicial complexes or {\bf delta sets} can be visualized as graphs.
A {\bf finite abstract simplicial complex} for example is a finite set of non-empty sets, closed
under the operation of taking non-empty subsets. \cite{DehnHeegaard}. Taking these sets as nodes and 
connecting two if one is contained in the other produces a graph with the same topological
features than the complex. It can be useful to extend the frame work slightly and work on 
{\bf delta sets}, a structure that still has an elementary singular chain complex and is more 
robust with respect to essential operations: 
{\bf 1)} taking open sets in simplicial complexes or 
{\bf 2)} taking quotients to see the host as a branched cover 
{\bf 3)} taking products of simplicial complexes and 
{\bf 4) }taking level set $S=\{ f=0\}$ of maps. 
For the later, we take the sets on which the functions change sign and impose some
non-degeneracy condition, replacing the notion of the Jacobean $df$ having maximal rank. 
While the level sets in a simplicial complex is an open set at first, 
their graph realizations define again closed sets. 

\paragraph{}
The face maps of a delta set allow to compute the cohomology groups directly using
kernels of concrete Hodge Laplacian matrix defined without a Barycentric refinement. 
This is important because the computation of the delta set representing a $d$-manifold 
takes a factor of at least $(d+1)!$ less time than computing it for the 
simplicial complex of the graph. For cohomology, 
the time improvement is even more dramatic, as we would need compute with much larger matrices, leading to 
a change of complexity that is polynomial in $(d+1)!$. We still want to realize the level set $S$
as a graph because we want to ``see" it as a d-manifold and we have the above natural definition
of manifold only for graphs or simplicial complexes. Also the delta set version of the Cartesian product
of two manifolds is a manifold, once realized as a graph. 
The cohomology of the product of two simplicial complexes can be computed faster
without realizing it as a graph. In \cite{KnillKuenneth}, we had used a product which is 
non-associative and related to the Stanley-Reisner ring. 
It was the product of delta sets followed by a graph realization. 

\paragraph{}
An example of a symmetry for manifolds are Dehn-Sommerville relations. They are very general 
\cite{Sommerville1927,Klee63,DehnSommerville,dehnsommervillegaussbonnet}. 
Here is an example: the f-vector of any $4$-manifold is perpendicular to the {\bf Dehn-Sommerville vector} 
$(0, -22, 33, -40, 45)$. The later is an eigenvector of the transpose of the Barycentric refinement matrix.
Dehn Sommerville helped us to see that an odd dimensional manifold has zero curvature. 
Let us look at an example: if $G$ is a 2-sphere with f-vector $(6,12,8)$, the 
f-vector of the delta set and 4-manifold
$G \times G$ is the convolution $(6,12,8)*(6,12,8)$, which is $(36, 144, 240, 192, 64)$. This is not 
perpendicular to the above Dehn-Sommerville vector. The f-vector of the graph representing the 4-manifold 
$S^2 \times S^2$ however is $(676, 8928, 28992, 34560, 13824)$ which is perpendicular.
To compute the cohomology, the Hodge matrix of the delta set implementation of $S^2 \times S^2$ 
is a $676 \times 676$ matrix, while the Hodge matrix belonging to the simplicial complex of
the graph is a $86980 \times 86980$ matrix. This is a significant reduction of complexity. By 
starting with delta set representations, the size can be even much smaller. 

\paragraph{}
An other place where co-dimension 2 surfaces matter is when rewriting {\bf curvature} $K(x)$
of an even-dimensional manifold in terms of expectations of the
Euler characteristics of discrete algebraic set in $S(x)$
\cite{indexformula,eveneuler}. Thanks to Sard, it can conveniently be related to the Euler 
characteristic of a co-dimension $2$ manifold. For odd-dimensional manifolds $M$, this 
immediately gives that the curvature is constant zero, something which 
bothered us still in 2011 \cite{cherngaussbonnet} where we only had experimental
evidence yet. The Dehn-Sommerville symmetries allowed to see this. An other path to verify this was
through index expectation. The symmetric Poincar\'e-Hopf index $j_f(x) = i_f(x) + i_{-f}(x)$ 
can be written as $-\chi(M_f(x))$, where $M_f(x)$ is a level surface contained
in the unit sphere of $x$. The symmetric Poincar\'e-Hopf index are therefore constant
zero for odd-dimensional manifolds and curvature $K(x) = {\rm E}[j_f(x)]$ given as expectation
is then zero too. 

\paragraph{}
The co-dimension two picture generalizes what we know classically for 
Riemannian manifolds. Let us explain this in dimension $2$, where it is elementary
in the discrete and goes back to Eberhard \cite{Eberhard1891} in the discrete. It
took longer in the continuum (see \cite{Chern1979} for history). 
Let $M$ is a classical 2-dimensional Riemannian manifold, meaning now a continuum 
manifold, not a discrete manifold.
The index $j_f(x) = 1-\{ y \in S_r(x), f(y)=f(x)\}/2$ at a point $x \in M$ is for small enough 
positive $r$ and for a generic smooth $f$ is a {\bf divisor}. This means that it is an 
{\bf integer-valued function} on $M$, 
which is non-zero only at finitely many points. These points are located on the 
singularity set of $f$ on $M$. For {\bf Morse functions} $f$ on $M$, one has
$j_f(x)=-1$ at saddle points and $j_f(x)=1$ at maxima and minima. Poincar\'e-Hopf tells
that the number of maxima and minima minus the number of saddles is the Euler characteristic. 
When averaging $j_f(x)$ over all Morse functions one gets the traditional Gaussian curvature
provided the probability measure on Morse functions is locally homogeneous. This can be 
achieved by placing $M$ into an ambient Euclidean space and taking a rotationally symmetric
Haar measure on the probability space of all linear functions.

\begin{figure}[!htpb]
\scalebox{0.2}{\includegraphics{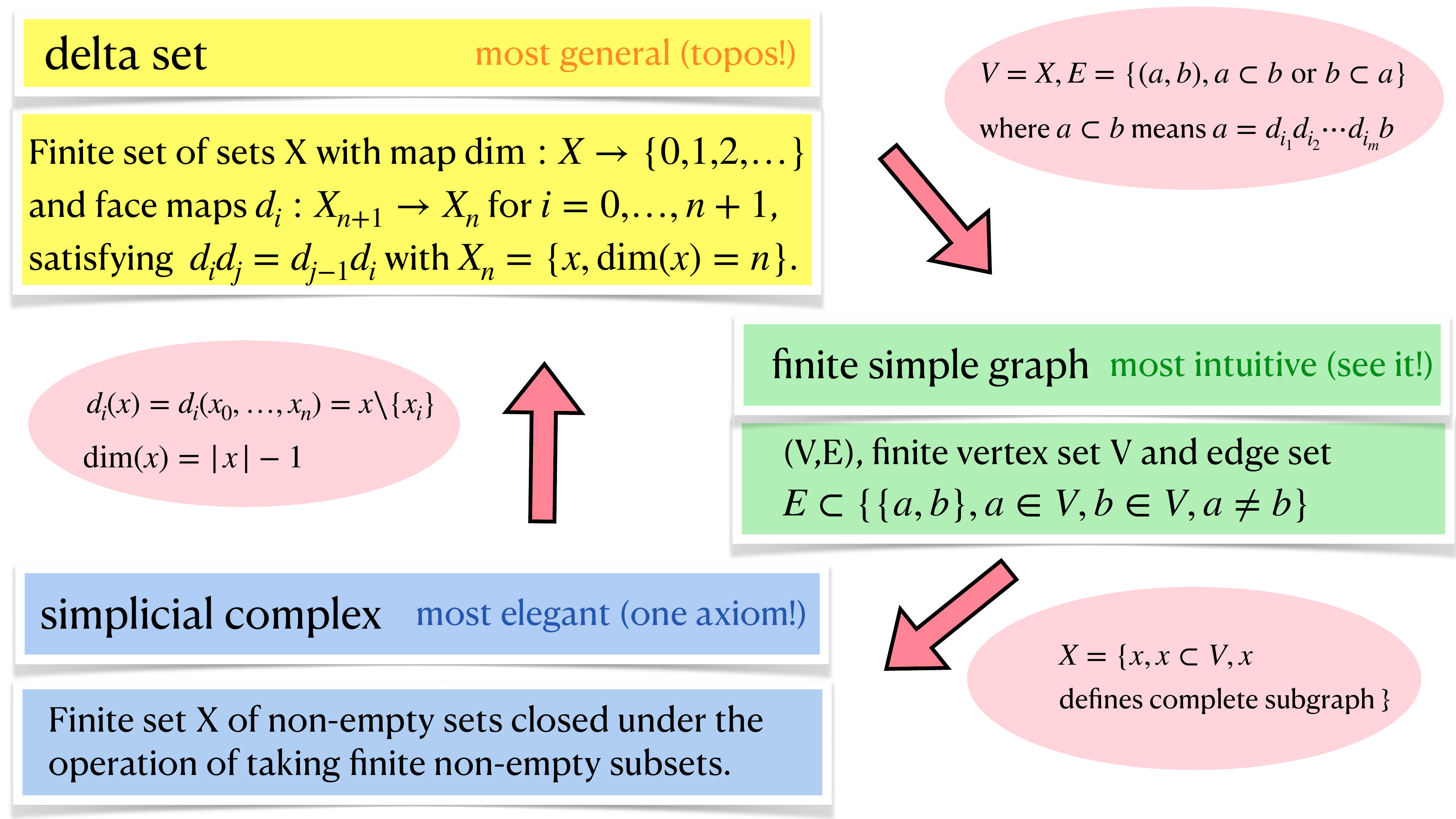}}
\label{trinity}
\caption{
Delta sets, simplicial complexes or graphs form a {\bf trinity
of structures}, where each structure has advantages. We see it
aligned with the Unix philosophy ``Simplicity, Clarity and Generality"
\cite{KernighanPike}. Going around once in this circle produces a 
Barycentric refinement. 
}
\end{figure}

\paragraph{}
When Nash-embedding a general compact Riemannian manifold into an Euclidean space $E$ 
\cite{EssentialNash} and taking
the expectation $K(x)={\rm E}[i_f(x)]$ over the probability space of all linear functions with
measure invariant under all rotations in the Euclidean space $E$ we get the normalized 
{\bf Gauss curvature} satisfying $\int_M K(x) \; dV$, if $dV$ is the normalized Riemannian volume measure.
The same works for any even-dimensional manifold and the index expectation is the 
Euler integrand in the {\bf Gauss-Bonnet-Chern} result for even-dimensional 
Riemannian manifolds. For discrete Gauss-Bonnet, see \cite{cherngaussbonnet,dehnsommervillegaussbonnet}.

\section{Discrete Morse Sard}

\paragraph{}
We start with giving the definition of a manifold. We have used these notions for many years now. They 
were motivated by notions in "digital 
topology" (in particular \cite{Evako1994,I94a}) and "discrete Morse theory" (which takes a slightly 
different approach like defining spheres through critical points \cite{Forman1999}). The history
of manifolds in the discrete and continuum is interest \cite{lakatos,Scholz,Dieudonne1989,Richeson}.
We start with the important notion of {\bf unit sphere} $S(v)$ of a vertex $v$ in a graph $G=(V,E)$. 
The graph $S(v)$ is the sub-graph generated by the set $W$ of vertices directly attached to 
$v$, meaning that $S(x)=(W,E_W)$, where $E_W \subset E$ consists of all edges $(a,b)$ in $E$
for which both $a,b$ are in $W$. The name "unit sphere" is justified because it is the unit sphere
in the metric space obtained by taking the geodesic distance as metric on the graph. 

\paragraph{}
The inductive definitions are close to piecewise linear (PL) geometry. It is however 
constructive in the sense that we can find out in finitely many computation steps whether a graph
is a manifold or not: 
a finite simple graph $G=(V,E)$ is a {\bf $d$-manifold} if every unit sphere
$S(v), v \in V$ is a $(d-1)$-sphere. A $d$-manifold is called a {\bf $d$-sphere} 
if the graph $G-v$ is contractible for some $v \in V$. A graph is {\bf contractible}
if both $S(v)$ and $G-v$ are contractible for some $v \in V$. 
The empty graph $0$ is the $(-1)$-sphere, the $1$-point graph $1$ is contractible. 

\paragraph{}
The reason to stick with ``contractible" rather than ``homotopic to 1" is because 
recognizing spheres is decidable as defined, while recognizing spheres using 
``1-homotopy" is not \cite{Novikov1955,Chernavsky,SphereRecognition}.
While the set-up does not matter for mathematical theorems and 
most results in this area can easily be bent to the larger non-constructive version of manifolds,
it matters when working with concrete models as a computer scientist,
where we consider it important that we can decide whether we deal with a manifold or not. 

\paragraph{}
Most theorems proven for the just defined constructive class obviously also hold for
the more general case. The definitions essentially covers {\bf PL geometry},
a category of geometries known to be much different from the category of {\bf topological manifolds}.
For example, there are complexes like the double suspension of a homology sphere which has 
a geometric realization that is homeomorphic to a sphere (by the double suspension theorem)
but which is not a sphere in the above sense because in the above definition, unit spheres
in unit spheres are spheres, while for a double suspension of a homology sphere, there are
2 vertices $v,w$, for which the unit sphere $S_{S(v)}(w)=S(w) \cap S(v)$ in $S(v)$ 
is a homology sphere. 

\paragraph{}
The following is a discrete version of the classical Morse-Sard theorem \cite{Morse1939,Sard1942}.
The classical theorem assured that for any differentiable function $f$ on a manifold, 
almost all $c$ are regular values in the sense that $f^{-1}(c)$ is a $(d-1)$-manifold (see \cite{Mil65}). 
In the discrete, the exception set is now known to be contained in $f(V)$. 
The set $\{ f=0 \}$ is defined as the set of all simplices in $G$ on which $f$ changes sign. 
It could be formulated differently: 
a simplicial map from a simplicial complex $G$ representing
a d-manifold to $K_2=\{ \{1\},\{2\},\{1,2\} \} $ has the property that $f^{-1}( \{ 1,2\} )$
represents a $(d-1)$-manifold. 

\paragraph{}
If $G$ is the simplicial complex of the geometry,
the level set is at first just an open set $U \subset G$ in the Alexandrov topology. When saying $U=\{ f=0 \}$,
we think of it as a {\bf graph} in which the sets of $U$ are the vertices and where two 
are connected if one is contained in the other. 
We let this graph represent what we mean with $\{f=0\}$. The level set $\{f=0\}$ 
is the same when we replace $f$ with ${\rm sign}(f)$, a function taking values in $\{-1,1\}$. 

\begin{thm}[Morse Sard for one function] 
If $G$ is {\bf $d$-manifold} and $f: V \to R$ is a real function,
then the level surface $\{ f = c \}$ for $c \notin f(V)$ is either
a $(d-1)$-manifold or empty. 
\end{thm}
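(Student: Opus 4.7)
The plan is to work inside the Barycentric refinement $G'$, whose vertices are the simplices of $G$ and whose edges record strict containment, and to realize $\{f=c\}$ as the full subgraph $H$ of $G'$ on the \emph{sign-changing} simplices. Since $c\notin f(V)$, after replacing $f$ by $f-c$ we may assume $f:V\to\{-1,+1\}$; a simplex $\sigma$ is then sign-changing exactly when its vertex set meets both classes $\sigma^+$ and $\sigma^-$. The theorem then follows once we verify that the unit sphere of every sign-changing $\sigma$ inside $H$ is a $(d-2)$-sphere.

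The main structural observation, standard for Barycentric refinements, is that the unit sphere of $\sigma$ in $G'$ splits as a join
\[
S_{G'}(\sigma)\;=\;S^-_{G'}(\sigma)\,*\,S^+_{G'}(\sigma),
\]
where $S^-$ collects the proper sub-simplices of $\sigma$ and $S^+$ the strict super-simplices. The same join decomposition passes to $H$, because any simplex properly containing a sign-changing $\sigma$ is automatically sign-changing. Hence $S^+_H(\sigma)$ is the full link of $\sigma$ in $G$, a $(d-\dim\sigma-1)$-sphere by the manifold hypothesis, and the proof reduces to the combinatorial claim that $S^-_H(\sigma)$ -- the graph of sign-changing proper sub-simplices of $\sigma$ -- is a $(\dim\sigma-2)$-sphere.

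For this lemma I would identify $S^-_H(\sigma)$ explicitly. Writing $p=|\sigma^+|-1$ and $q=|\sigma^-|-1$, so $p+q=\dim\sigma-1$, the map $\tau\mapsto(\tau\cap\sigma^+,\tau\cap\sigma^-)$ is a poset isomorphism between sign-changing subsets of $\sigma$ and the product of nonempty-face posets of $\Delta^p$ and $\Delta^q$; the latter is the face poset of the product polytope $\Delta^p\times\Delta^q$. Its order complex is the Barycentric subdivision of that $(p+q)$-polytope, a disk of dimension $\dim\sigma-1$. Removing the top pair $(\sigma^+,\sigma^-)$, which corresponds to $\sigma$ itself, leaves precisely the proper sign-changing sub-simplices and realizes them as the boundary $\partial(\Delta^p\times\Delta^q)$, a $(\dim\sigma-2)$-sphere. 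Joining with $S^+_H(\sigma)$ yields $S^{\dim\sigma-2}*S^{d-\dim\sigma-1}=S^{d-2}$, as required.

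The main obstacle is bookkeeping at the extreme dimensions. When $\dim\sigma=1$ there are no sign-changing proper sub-faces, so $S^-_H(\sigma)=\emptyset$ must be read as $S^{-1}$; when $\sigma$ is a top simplex, $S^+_H(\sigma)=\emptyset$. Both are covered by the paper's conventions $\emptyset=S^{-1}$ and $S^a*\emptyset=S^a$, so the join step still delivers $S^{d-2}$ uniformly. A secondary point is to confirm that the ``graph of sign-changing simplices with containment edges'' and the order-complex realization used above share the same $1$-skeleton, which is all that the inductive manifold definition inspects.
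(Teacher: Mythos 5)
Your proposal is correct, and it shares the paper's overall skeleton: pass to the Barycentric refinement, use the join splitting $S_{G_1}(x)=S^-(x)\oplus S^+(x)$, observe that every coface of a sign-changing simplex is sign-changing so that $S^+_K(x)=S^+_G(x)$ is a $(d-\dim x-1)$-sphere, and thereby reduce the theorem to the local claim that the sign-changing \emph{proper} faces of $x$ form a $(\dim x-2)$-sphere. Where you genuinely diverge is in how that local claim is justified. The paper merely asserts that $S^-_K(x)$ is a ``hyper-sphere'' in the boundary sphere $S^-_G(x)$ (and, in the section on arbitrarily many functions, describes the level set inside a simplex as the Zykov join of the sphere where $f$ is positive with the sphere where $f$ is negative, which keeps the argument inside the sphere monoid, since a join of two simplex boundaries is a sphere; the graph one actually analyzes is then the Barycentric refinement of that join). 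You instead identify the poset of sign-changing faces with the face poset of the prism $\Delta^p\times\Delta^q$ via $\tau\mapsto(\tau\cap\sigma^+,\tau\cap\sigma^-)$, so deleting the top element exhibits $S^-_K(x)$ as the Barycentric subdivision of $\partial(\Delta^p\times\Delta^q)$, an explicit $(\dim x-2)$-sphere; for a tetrahedron split $2{+}2$, for instance, this correctly produces the $8$-cycle, which is the refinement of the $4$-cycle arising in the paper's join picture. Your route makes the graph identification precise and explicit, at the price of importing the standard but external fact that Barycentric subdivisions of convex polytope boundaries are spheres in the recursive (Evako-style) sense; the paper's join formulation gets membership in the recursive sphere class directly from the join monoid, but still needs that the Barycentric refinement of a sphere is a sphere to reach the graph both of you actually inspect. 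Your treatment of the degenerate cases ($\dim x=1$ giving $S^-_K(x)=\emptyset=S^{-1}$, top-dimensional $x$ giving $S^+_K(x)=\emptyset$) and the final count $S^{\dim x-2}\oplus S^{d-\dim x-1}=S^{d-2}$ agree with the paper.
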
 

\begin{figure}[!htpb]
\scalebox{0.6}{\includegraphics{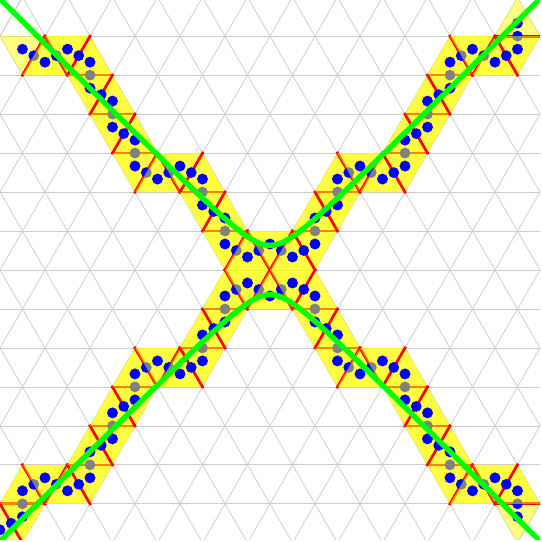}}
\scalebox{0.6}{\includegraphics{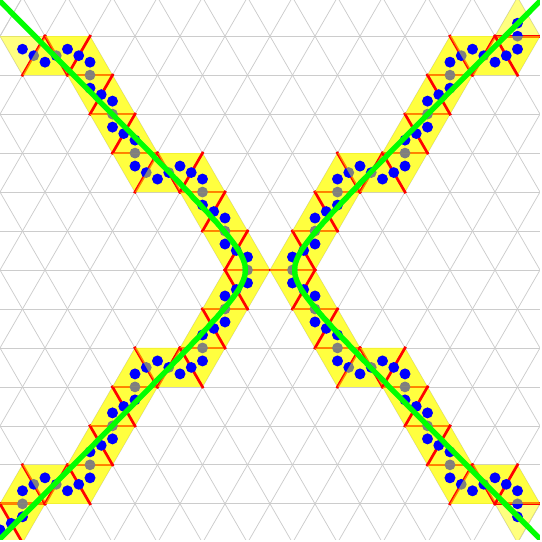}}
\label{potential}
\caption{
In this figure, we see part of a flat discrete $2$-torus. It is embedded in the plane
to conveniently attach $x,y$ coordinates to each vertex
and $f(x,y)=x^2-y^2$. We see the contours $\{ f=0.3\}$ and $\{f=-0.3\}$. 
These level sets $\{f=c\}$ consist of all edges and triangles on which $f-c$ changes sign. 
It is an open set in a simplicial complex but becomes a graph with 
simplices as vertices and where two are connected, if one is contained
in the other. 
}
\end{figure}

\paragraph{}
The graph $K=\{ f = c\}$ is a sub-graph of the
{\bf Barycentric refinement graph} $G_1=(V_1,E_1)$ in which the complete sub-graphs $x$ 
of $G$ are the vertices and where two such vertices are connected if one is contained in the other. 
The graph $K$ has as vertices all complete sub-graphs of $G$ on which $f-c$ changes sign. 
While the classical Morse-Sard lemma \cite{Morse1939,Sard1942} tells that the critical values have 
measure zero, the discrete case more explicitly states that {\bf all} values different 
from $f(V)$ are regular. As in the continuum case, also values not reached are considered regular
in the sense that they are not considered singular values. 

\paragraph{}
The proof of the discrete Morse-Sard theorem used that the {\bf Barycentric
refinement} $G_1$ of a graph $G$ has a natural {\bf hyperbolic structure} for the 
{\bf dimension function} ${\rm dim}(x)$. 
This function ${\rm dim}$ is naturally a discrete Morse function in the sense that 
both $S^-_f(x)$ and $S^+_f(x)$ are spheres at critical points.
For the dimension function, every $x$ is a critical point 
and the dimension $m={\rm dim}(x)$ is the {\bf Morse index} and 
$(-1)^{{\rm dim}(x)} = 1 -\chi(S^-_f(x))$ is the Poincar\'e-Hopf index. 
By definition, the unit sphere $S(x)$ of a vertex $x$ in $G_1$ consists of all simplices $S^-(x)$ 
which are either strictly contained in $x$ or the set of simplices $S^+(x)$ which strictly contain $x$. 
The simplicial complex $S^-(x)$ is a $(m-1)$-sphere in the sense that its
Barycentric refinement is a $(m-1)$-sphere graph. The set $S^+(x)$, also called {\bf star} of $x$,
is an open set, but it is not a simplicial complex in general. 
It is a $(d-k-1)$-sphere if we look at 
its Barycentric refinement, where the elements are the vertices and a connection holds if one 
is contained in the other. The Barycentric refinement graphs of $S^-(x)$
and $S^+(x)$ are both spheres if $G$ is a manifold and the unit sphere $S(x)$ is 
equal to $S^-(x) \oplus S^+(x)$, where $\oplus$ is the {\bf Zykov join} \cite{Zykov}.
The join operation is a monoid structure on graphs and
spheres are a sub-monoid. The $(-1)$-sphere $0$ is the zero element. 
To see that $K = \{ f = c \}$ of $G_1$ is a $(d-1)$-manifold
note that because $f$ changes sign on $x$ it also changes sign on any $y$ which 
contains $x$ so that $S_G^+(x)=S_K^+(x)$.
The open set $S_K^-(x)$ is a hyper-sphere in $S_G^-(x)$. It is obtained from $S_G^-(x)$ 
So, $S_K(x) = S^-_K(x) \oplus S^+_K(x)$ is a $(d-2)$-sphere and $K$ is a $(d-1)$-manifold. 

\section{Two functions}

\paragraph{}
So far, we covered essentially what was done already in \cite{KnillSard}. Here is a new
approach to the case when we have 2 or more functions. 
If $f,g$ are two functions on $G$, then the set of simplices on which both $f,g$ change sign is 
not a manifold in general. It is much too ``fat" (indeed, we will see it soon as a union
of manifolds). In 2015, we opted to extend the function 
$g$ to the Barycentric refinement $G_1$ of $G$ and to look at the level surface 
$\{ g = d \}$ in the sub-graph $\{ f = c \}$ of $G_1$. While it works, it
was unsatisfactory for two reasons, explained in the next paragraph. 

\paragraph{}
First of all, $\{ f = c, g = d\}$ was not the same than $\{ g=d, f=c \}$. 
Second, we needed to extend $g$ to the Barycentric refinement so that
the graph $\{ f = c, g = d\}$ is a sub-graph of the second Barycentric refinement of $G$. 
The number of vertices grows exponentially with every refinement. 
The $f$-vector $(f_0,f_1, \dots, f_d)$ counting the number of complete sub-graphs 
gets mapped into $A^2 f$, with the Barycentric operator $A_{ij} = S_{ji} i!$ involving 
{\bf Stirling numbers of the second kind}. If $G$ is the smallest possible 4-manifold with 
$f=(10, 40, 80, 80, 32)$, then the second Barycentric refinement $G_2$ already has the f-vector 
$A^2f = (24482, 303840, 970560, 1152000, 460800)$.
When experimenting with co-dimension 2 level surfaces in a 4-manifold, we would need to
deal with graphs of at least 24 thousand nodes. 
The first Barycentric refinement $G_1$ has a reasonable size because 
$Af =(242, 2640, 8160, 9600, 3840)$. Morse-Sard dealing with several functions at once 
therefore is a relief in the sense that it reduces complexity. 

\paragraph{}
Let us first consider the case of two functions $f,g$ on a $4$ manifold $G$. 
A motivation for the case of two functions is {\bf quantum mechanics} 
because {\bf waves} in quantum mechanics are modeled by complex-valued functions. 
The usual notion for such functions is $\psi$. Considering $\psi(v) = f(v) + i g(v)$ is equivalent to 
have two real valued functions $f,g$. The reason for taking dimension $4$ for the host is 
that this is the smallest dimension, where level surfaces $\{ \psi=0 \}$ become interesting; 
in dimension $2$ we would just get points and in dimension $3$ just get unions of cyclic graphs. 
If $G$ is 4-dimensional real manifold, we can also think of it as a 2-dimensional
{\bf complex surface}. The complex curve, a 1-dimensional complex manifold, 
$\{ \psi=0 \}$ is according to the classical Morse-Sard a real manifold. 
In the finite, where we can look at any function as ``analytic",
we are allowed to see level surfaces as complex curves. 
But what do we mean with $\{ \psi =0 \}$?

\paragraph{}
{\bf Definition:} 
Define $\{ \psi = c \}$ as the sub-graph of the Barycentric refinement $G_1$ of $G$,
generated by the vertices which are simplices in $G$ which contain a triangle $x$ on which 
$\psi-c = f + i g$ has the property that \\
(i) ${\rm sign}(f+ig)$ takes at least 3 different values  on $x$ \\
(ii) $(f + i g)(x)$ must includes the values $e=\{ -1+i,1-i \}$. 
This generalizes the real function case, where we had defined $\{f = c\}$ as the set of 
simplices which contain an edge $x$ on which $f-c$ changes sign. 

\paragraph{}
There is a choice involved. We could for example have forced
that $e=\{ -1-i,1+i\}$ is included, a case which would have corresponded to the wave 
$\overline{\psi}$, the {\bf complex conjugate}. 
In the continuum, $\{ \psi=0 \}$ and $\{ \overline{\psi}=0 \}$
are the same. In the discrete, these are most of the time different surfaces with different
features. Only if $\psi$ is sufficiently smooth, the two surfaces agree topologically. 
Indeed, for small $G$, the difference between $\{ \psi=0 \}$ and $\{ \overline{\psi}=0 \}$
is significant. It raises many interesting questions. The two cases were not the only ones.
With $k$ functions, there are $2^{k-1}(2^k-1)$ cases which is $6$ for $k=2$. 
Every of the $6$ edges $e$ in the complete graph $U$ with 4 vertices 
$\{ \pm 1, \pm 1\}$ produces a different surface $\{ \psi = 0 \}_e$ in general. 

\begin{thm}[Morse-Sard for complex functions on a d-manifold]
If $G=(V,E)$ is a $d$-manifold and $\psi: V \to \mathbb{C}$ is a
complex-valued function. Then, for all $c \notin \psi(V)$, the level
surface $K=\{ \psi =c \}$ is a $(d-2)$-manifold or empty. 
\end{thm}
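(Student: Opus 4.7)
The plan is to mimic the proof of Theorem~1 one dimension deeper. For each vertex $x$ of $K$, which is a simplex of $G$ of some dimension $m\ge 2$, I verify that the unit sphere $S_K(x)$ in $K$ is a $(d-3)$-sphere; by definition this makes $K$ a $(d-2)$-manifold. Using the hyperbolic structure of $G_1$, I decompose $S_K(x) = S_K^-(x)\oplus S_K^+(x)$ inside $S_G^-(x)\oplus S_G^+(x) = S_{G_1}(x)$, where $S_G^-(x)$ is an $(m-1)$-sphere and $S_G^+(x)$ is a $(d-m-1)$-sphere.

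Since $c\notin\psi(V)$, every vertex of $x$ carries one of the four signs $\pm 1\pm i$, partitioning $V(x)$ into sign classes $V_{++},V_{+-},V_{-+},V_{--}$. Writing $A_1=V_{-+}$, $A_2=V_{+-}$ and $A_3=V_{++}\cup V_{--}$, the condition that $x$ contain a witnessing triangle is exactly $A_i\neq\emptyset$ for each $i$, and for any face $y\subseteq V(x)$ one has $y\in K$ iff $y\cap A_i\neq\emptyset$ for all $i$. The $S_K^+$ part is immediate: every simplex containing $x$ also contains the witnessing triangle, so $S_K^+(x) = S_G^+(x)$, a $(d-m-1)$-sphere. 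It remains to show that $S_K^-(x)$, the set of proper faces of $x$ meeting all three $A_i$, is an $(m-3)$-sphere; the join then has dimension $(m-3)+(d-m-1)+1 = d-3$ as needed.

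The claim about $S_K^-(x)$ I would prove by induction on $m$. The base case $m=2$ is trivial: no proper face of a triangle contains a triangle, so $S_K^-(x)$ is empty, i.e.\ the $(-1)$-sphere. For the inductive step, I decompose the unit sphere of a vertex $y$ inside $S_K^-(x)$ by the same hyperbolic method. The upper part consists of faces $y'$ with $y\subsetneq y'\subsetneq V(x)$, for which $y'\in K$ is automatic, forming the Barycentric refinement of the boundary of the simplex on $V(x)\setminus y$, an $(m-|y|-1)$-sphere. The lower part consists of sub-faces of $y$ that still meet every $A_i$, which by induction applied to the simplex $y$ with its inherited three-class partition $y\cap A_1, y\cap A_2, y\cap A_3$ is a $(|y|-4)$-sphere. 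Joining gives dimension $(|y|-4)+(m-|y|-1)+1 = m-4$, so $S_K^-(x)$ is an $(m-3)$-manifold. Its homotopy type is also correct: topologically, $S_K^-(x)$ is the complement in $\partial x$ of the three opposite faces supported on $V(x)\setminus A_i$, and it deformation retracts to the boundary of the product $\Delta^{A_1}\times\Delta^{A_2}\times\Delta^{A_3}$, an $(m-3)$-sphere.

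The main obstacle will be upgrading $S_K^-(x)$ from an $(m-3)$-manifold of the right homotopy type to a sphere in the constructive sense used in the paper, i.e.\ producing a vertex whose removal leaves a contractible graph. I expect this to go through by choosing $y\in S_K^-(x)$ of minimum size (a triangle with exactly one vertex in each $A_i$) and showing that $S_K^-(x)\setminus\{y\}$ collapses to the opposite side of the simplex, or by exhibiting a direct Zykov join description compatible with the three-way partition. Either route parallels the sphere recognition implicit in the closing sentence of the proof of Theorem~1. Orientability and the emptiness alternative then follow exactly as in the one-function case.
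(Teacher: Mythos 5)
Your skeleton is the same as the paper's: decompose $S_{G_1}(x)=S^-_G(x)\oplus S^+_G(x)$, note that $S^+_K(x)=S^+_G(x)$ because any simplex containing $x$ inherits the witnessing triangle, and reduce everything to the local claim that the level set $S^-_K(x)$ inside the boundary sphere of the simplex $x$ is an $(m-3)$-sphere; the join then has the right dimension. The paper disposes of that local claim in one sentence (``$S^-_G(x)$ is the boundary sphere of a simplex for which every sub-manifold is a sphere''), while you try to prove it. Your bookkeeping is correct: with $A_1=V_{-+}$, $A_2=V_{+-}$, $A_3=V_{++}\cup V_{--}$, a face lies in $K$ iff it meets all three classes, and your identification of these faces with the nonempty faces of $\Delta^{A_1}\times\Delta^{A_2}\times\Delta^{A_3}$ is actually stronger than the ``deformation retract'' you claim: removing the top face $x$, the graph $S^-_K(x)$ \emph{is} the comparability graph of the proper face poset of that polytope, i.e.\ the Barycentric refinement of its boundary, a PL sphere of dimension $m-3$.

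The genuine gap is the one you flag yourself, and it is not optional polish. Your induction cannot close as stated: the inductive step invokes the \emph{sphere} statement in lower dimension (``by induction \dots is a $(|y|-4)$-sphere'') but its conclusion at stage $m$ is only that every unit sphere of $S^-_K(x)$ is a join of two spheres, i.e.\ that $S^-_K(x)$ is an $(m-3)$-\emph{manifold}; a discrete manifold with the homotopy type of a sphere is not thereby a sphere in the paper's recursive sense, which demands a vertex whose removal leaves a contractible graph (in the continuum the analogous upgrade is the generalized Poincar\'e conjecture, so it cannot be waved through). Thus the collapsing or Zykov-join argument you ``expect to go through'' is exactly the content of the local lemma, and it is missing --- which, to be fair, is also the step the paper itself asserts rather than proves. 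The cleanest repair is to push your poset observation one step further: prove once that the Barycentric refinement of the boundary complex of a product of simplices (or of any convex polytope, e.g.\ via a Bruggesser--Mani shelling) is a sphere in the recursive sense. That single lemma completes your argument, feeds the induction properly, and simultaneously substantiates the paper's unproved claim about level sets in simplex boundaries.
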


\begin{proof}
The proof is very similar to the case with one function. 
If $K=\{ \psi=0 \}$ is not empty, we need to show that every unit sphere $S_K(x) \subset S_G(x)$
of $x$ is a $(d-3)$ sphere. Define $m={\rm dim}(x)$. To see that $S_K(x)$ is a $d-3$
sphere, use the decomposition 
$S_G(x) = S^-_G(x) \oplus S^+_G(x)$, where $S^-(x)$ is a $m-1$ sphere and $S^+(G)$ is a $d-m-1$ sphere. 
We still have $S^+_K(x) = S^+_G(x)$ and that $S^-_K(x)$ is a co-dimension
$2$-manifold in the $S^-_G(x)$. Because $S^-(G)$ is the boundary sphere of a simplex for 
which every sub-manifold is a sphere, $S^-(K)$ is a $(m-3)$-dimensional sphere.
Now $S_K(x)  = S_K^-(x) \oplus S_K^+(x)$ is the join of $(k-3)$-sphere 
and $(d-k-1)$-sphere which produces a $(d-3)$-sphere.
\end{proof} 

\begin{figure}[!htpb]
\scalebox{0.6}{\includegraphics{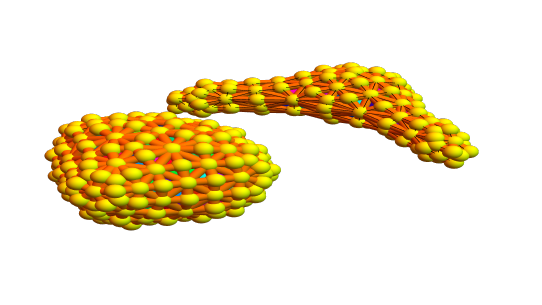}}
\scalebox{0.6}{\includegraphics{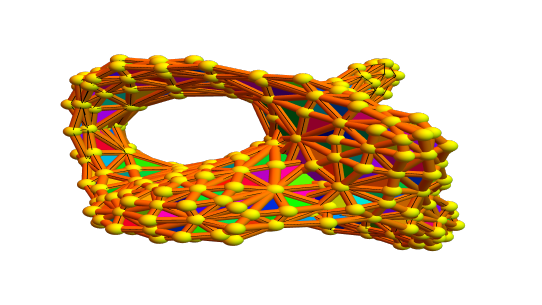}}
\label{conjugate}
\caption{
For a fixed $\psi$, we display the level surfaces $\{ \psi=0 \}$ and 
$\overline{\psi}=0\}$ in a small $\mathbb{RP}^4$. The complex $K_4$ (for
the 4 possible sign values) 6 edges leading to 6 different type of surfaces. 
They can differ topologically only in the small. If $G$ is larger and $\psi$ 
is smooth enough, then they have the same topology. 
}
\end{figure}

\paragraph{}
Here is a more general set-up for co-dimension-2 manifolds: look at the 3-dimensional simplex 
$U=K_4$ with vertices $\{ (-1,-1),(-1,1),(1,-1),(1,1) \}$. Given one of the 6 edges $e$ in $U$, 
there are two triangles $t_1,t_2$ containing $e$. Define 
$$   \{ \psi = 0 \}_e = \{ x \in G, {\rm sign}(\psi(x))  \in \{t_1,t_2\} \},  $$
where ${\rm sign}(a+ib) = {\rm sign}(a) + i {\rm sign}(b)$. Each of these 6 level sets
$\{ \psi = 0 \}_e$ is a co-dimension 2-manifold. 

\begin{figure}[!htpb]
\scalebox{1}{\includegraphics{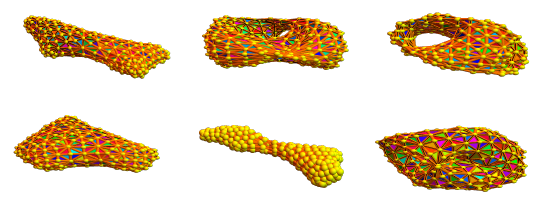}}
\label{flavor}
\caption{
Chose an edge $e$ in the 3-simplex $U$ defined by the corners $\{ \pm 1, \pm 1\}$. 
Define $\{ \psi=0\}_e = \{ x \in G, \psi(x)$ contains at least one 
triangle containing $e \}$. This produces in general $6$ different surfaces for $2$
functions and $(2^k-1) 2^{k-1}$ if we have $k$ functions. If $p=k$ and $2^p-1$ are
prime, then we have a "perfect number" of surfaces. 
}
\end{figure}

\paragraph{}
What happens if we take the inverse image of one of the 4 triangles $t$ in $U$
If $\psi: G \to \mathbb{C}$ does not take the value $0$, define 
$\Psi(x) = {\rm sign}(\psi(x))$. The following result allows us to produce manifolds with 
boundary. 

\begin{thm}
If $G$ is a $d$-manifold and $t$ is a triangle in $U$, then $\Psi^{-1}(t)$ 
is a $(d-2)$-manifold with boundary. 
\end{thm}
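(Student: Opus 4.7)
The plan is to replicate the local join-decomposition from the preceding two Morse-Sard theorems, but now to identify two types of vertex in $K = \Psi^{-1}(t)$ according to how many sign values $\psi$ realizes on a simplex. Write $\sigma = \mathrm{sign} \circ \psi$, let $v_0$ denote the unique vertex of $U$ not on $t$, and set $F_s = \sigma^{-1}(s) \cap V(x)$ for each sign value $s$. A simplex $x$ of $G$ belongs to $K$ precisely when $\sigma(V(x)) \supseteq t$, equivalently when $x$ contains a witness triangle $y$ with $\Psi(y) = t$. I would declare $x$ an \emph{interior vertex} if $\sigma(V(x)) = t$ (so $F_{v_0} = \emptyset$) and a \emph{boundary vertex} if $\sigma(V(x)) = U$ (so $F_{v_0} \neq \emptyset$); these are the only two possibilities, so the interior/boundary dichotomy is exhaustive.

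At an interior vertex $x$ of dimension $m$, the argument is the one used in the complex Morse-Sard theorem. The decomposition $S_G(x) = S^-_G(x) \oplus S^+_G(x)$ preserves the upper part: $S^+_K(x) = S^+_G(x) = S^{d-m-1}$, because any simplex containing $x$ still contains a witness triangle for $t$. The lower part $S^-_K(x)$ is the sub-complex of strict sub-simplices whose vertex set hits every $F_s$ for $s \in t$, and it is an $(m-3)$-sphere by the join-and-hyper-sphere argument of the earlier proof. The join gives $S_K(x) = S^{m-3} \oplus S^{d-m-1} = S^{d-3}$, the required unit sphere at an interior vertex of a $(d-2)$-manifold.

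At a boundary vertex $x$ the upper side is once more $S^+_K(x) = S^+_G(x) = S^{d-m-1}$. On the lower side, the vertices in $F_{v_0}$ may be added to or removed from $V(z)$ with no effect on the witness condition $V(z) \cap F_s \neq \emptyset$, introducing a contractible ``cone direction'' to the would-be sphere. More precisely, $S^-_K(x)$ in the Barycentric refinement decomposes as the simplicial join of the pure-case sphere (what one would obtain if $F_{v_0}$ were deleted, an $(m_0-3)$-sphere with $m_0 = m - |F_{v_0}|$) with the full simplex on $F_{v_0}$ (a $(|F_{v_0}|-1)$-ball). Using $S^a \oplus B^b = B^{a+b+1}$, the join is a $(m-3)$-ball, so $S_K(x) = B^{m-3} \oplus S^{d-m-1} = B^{d-3}$, the required unit sphere at a boundary vertex.

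Globally, $\partial \Psi^{-1}(t) = \{x : \sigma(V(x)) = U\}$ is the $t$-independent locus where all four sign values of $\psi$ are realized on $x$. This is a co-dimension-three discrete algebraic set, and by the same join/hyper-sphere argument applied one dimension further --- essentially a discrete Morse-Sard for three real functions --- it is a closed $(d-3)$-manifold, matching the boundary-unit-spheres $S_{\partial K}(x)$ as the equators of the $(d-3)$-balls computed above. The main obstacle is verifying the join decomposition at boundary vertices: one must confirm rigorously that the Barycentric refinement of ``strict sub-simplices hitting every $F_s$ for $s \in t$, with arbitrary $F_{v_0}$-part'' factors as the simplicial join of the pure-case sphere with the full simplex on $F_{v_0}$, and that the single excluded top simplex $V(z) = V(x)$ does not spoil this factorization. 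Small cases ($\sigma$ bijective on a tetrahedron, giving $B^0$; or $|F_{v_0}| = 2$ on a $4$-simplex, giving $B^1$) confirm the pattern, and once the join decomposition is established, the dimension formula closes the proof.
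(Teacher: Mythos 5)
Your route is genuinely different from the paper's. You try to verify the manifold-with-boundary property locally, computing every unit sphere of $K=\Psi^{-1}(t)$ and showing it is a $(d-3)$-sphere at vertices with $\sigma(V(x))=t$ and a $(d-3)$-ball at vertices with $\sigma(V(x))=U$. The paper never performs this local computation: it takes the second triangle $t_2$ sharing the edge $e$ with $t$, observes that $\Psi^{-1}(t)\cup\Psi^{-1}(t_2)=\{\psi=0\}_e$ is a closed $(d-2)$-manifold by the complex Morse--Sard theorem already proved, identifies $\Psi^{-1}(t)\cap\Psi^{-1}(t_2)$ with the locus where $\Psi$ attains all four values of $U$ (your boundary locus), which is a $(d-3)$-manifold by the several-function result applied with a constant third function, and concludes that the two pieces are $(d-2)$-manifolds with boundary glued along that $(d-3)$-manifold. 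Your bookkeeping that is parallel to the paper is fine: $S^+_K(x)=S^+_G(x)$ at every vertex of $K$, the interior links are handled exactly as in the complex Morse--Sard proof, $B^{m-3}\oplus S^{d-m-1}=B^{d-3}$, and the identification of the boundary locus with the ``all of $U$'' set agrees with the paper.

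The genuine gap is exactly the step you flag as the ``main obstacle'', and it is not merely unverified: the factorization you propose to verify is false as stated. Take $V(x)=\{a,a',b,c,d\}$ with $\sigma(a)=\sigma(a')=s_1$, $\sigma(b)=s_2$, $\sigma(c)=s_3$, $\sigma(d)=v_0$. Then $S^-_K(x)$ consists of the five faces $abc,\,a'bc,\,aa'bc,\,abcd,\,a'bcd$ and is the path $abcd$--$abc$--$aa'bc$--$a'bc$--$a'bcd$, while the proposed join of the pure-case $0$-sphere $\{abc,\,a'bc\}$ with the single vertex coming from $F_{v_0}=\{d\}$ is a path on three vertices; the two graphs are not isomorphic. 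Both happen to be $1$-balls, so the statement you actually need (that $S^-_K(x)$ is an $(m-3)$-ball at a boundary vertex) is very plausibly true, but that equivalence of balls is precisely the content of the missing lemma, so the literal join factorization cannot serve as its proof. Until that lower-link ball lemma is established by a real argument (for instance by induction on $|F_{v_0}|$, by a collapsing argument in the face poset, or by localizing the paper's union/intersection trick to the boundary sphere $S^-_G(x)$), the boundary case, and with it the theorem, remains unproved in your write-up; alternatively you can avoid the local computation altogether by following the paper's global gluing argument.
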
 

\begin{proof}
If $t=t_1$ is a triangle in $U$ and $t_2$ is an other triangle in $U$ which shares
with $t_1$ an edge $e$. By the previous result 
$$ H_1 \cup H_2 = \Psi^{-1}(t_1) \cup \Psi^{-1}(t_2) $$
is a $(d-2)$ manifold $H$ without boundary. Obviously $H_1=\Psi^{-1}(t_1)$ and 
$H_2 = \Psi^{-1}(t_2)$ are both part of this manifold $H$. Their intersection 
$H_1 \cap H_2$ consist of all $x$ such that $\Psi(x)$ is both in $t_1$ and $t_2$. 
This means that $\Psi(x)$ must reach all of $U$. This must be a $(d-3)$ manifold
(next section) as it can be seen as the level surface $\{ f=0,g=0,h=0\}$ where $h$ 
is a constant function not taking the value $0$. 
Since $H_1 \cap H_2$ is a $d-3$ manifold and $H_1 \cup H_2$ is a $d-2$ manifold,
both $H_1,H_2$ are $(d-2)$-manifolds with boundary. 
\end{proof} 

\begin{figure}[!htpb]
\scalebox{0.9}{\includegraphics{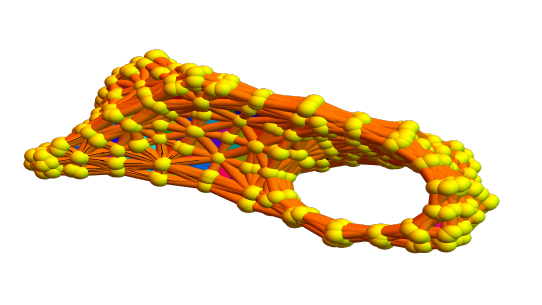}}
\label{solidtorus}
\caption{
As an application, we can produce solid 3 manifolds with 
boundary by taking a constructing a co-dimension 2 manifold 
$\psi^{-1}(t)$ in a 5-manifold for a given triangle. 
In this case we took for $G$ a suspension of $RP^4$ and 
got a solid 2-torus. 
}
\end{figure}

\paragraph{}
Here is a possible reformulation. We hope to generalize this much 
more in a future work.

\begin{coro}
If $G$ is a $d$-manifold and 
if $t_1,t_2$ are two triangles in $U$ intersecting in an edge $e$, then 
$\{ \psi = 0 \}_e$ is obtained by gluing $\Psi^{-1}(t_1)$ and 
$\Psi^{-1}(t_2)$ along a $d-3$ manifold. 
\end{coro}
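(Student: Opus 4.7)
The plan is to show that the corollary is essentially a repackaging of the previous two theorems, phrased in gluing language, and that the required pieces are already sitting in the proof of the with-boundary theorem.

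First, I would unpack the definitions to identify $\{\psi=0\}_e$ with the union $\Psi^{-1}(t_1)\cup\Psi^{-1}(t_2)$. By definition, $x\in\{\psi=0\}_e$ iff $\mathrm{sign}(\psi(x))$ contains some triangle of $U$ that contains the edge $e$; since $e$ is shared by exactly two triangles $t_1,t_2$ of the 3-simplex $U$, this condition is equivalent to $\mathrm{sign}(\psi(x))\supseteq t_1$ or $\mathrm{sign}(\psi(x))\supseteq t_2$, i.e.\ $x\in\Psi^{-1}(t_1)\cup\Psi^{-1}(t_2)$. By the complex Morse–Sard theorem, the left side is a $(d-2)$-manifold (without boundary), and by the preceding with-boundary theorem each $\Psi^{-1}(t_i)$ is a $(d-2)$-manifold with boundary. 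So all that remains is to identify the gluing locus.

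Next I would analyze $\Psi^{-1}(t_1)\cap\Psi^{-1}(t_2)$: this consists of simplices $x$ whose sign image contains both triangles, equivalently contains every vertex of $U$, i.e.\ $\mathrm{sign}(\psi(x))$ attains all four values $\{\pm 1\pm i\}$. As remarked in the proof of Theorem 3, imposing that all four sign values appear is the same as a three-real-condition level set $\{f=0,g=0,h=0\}$ (for a suitable constant third coordinate $h$), and hence by the general-$k$ Morse–Sard picture (Theorem~1 iterated, or equivalently the union/intersection bookkeeping of the previous proof) this intersection is a $(d-3)$-manifold.

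Finally, I would show that this intersection is precisely the topological boundary of each $\Psi^{-1}(t_i)$ inside $\{\psi=0\}_e$. For any vertex $x$ of $\Psi^{-1}(t_i)$, I would compute its link in $\Psi^{-1}(t_i)$ using the $S_G(x)=S_G^-(x)\oplus S_G^+(x)$ decomposition used throughout the paper: $S^+$ is preserved and $S^-$ restricts to a sphere or hemisphere according to whether or not, walking away from $x$, one is forced to encounter simplices whose sign image also contains the opposite triangle. A short sign-analysis shows that the link is a full $(d-3)$-sphere exactly when $x\in\Psi^{-1}(t_1)\cap\Psi^{-1}(t_2)$ (both sides of the boundary are accessible) and a $(d-3)$-ball otherwise; this identifies $\Psi^{-1}(t_1)\cap\Psi^{-1}(t_2)$ with the boundary of each $\Psi^{-1}(t_i)$, and so exhibits $\{\psi=0\}_e$ as the gluing of the two with-boundary manifolds along the common $(d-3)$-manifold.

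The main obstacle is the last step: making the link computation rigorous enough to confirm that the intersection really is the boundary (rather than merely contained in the boundary). This is a purely combinatorial check on the sign patterns $\mathrm{sign}(\psi)$ restricted to $S_G^-(x)$, using that on a simplex $y\supseteq x$ one sign value (present in $\mathrm{sign}(\psi(y))$) may still be absent from $\mathrm{sign}(\psi(x))$, but only in a way that interpolates consistently along faces. Once this is in place, the gluing statement is immediate from the previous theorems.
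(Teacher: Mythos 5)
Your proposal is correct and follows essentially the same route as the paper: there the corollary is offered as a reformulation of the preceding theorem's proof, in which $\{\psi=0\}_e=\Psi^{-1}(t_1)\cup\Psi^{-1}(t_2)$ is the $(d-2)$-manifold of the complex Morse--Sard theorem, the intersection $\Psi^{-1}(t_1)\cap\Psi^{-1}(t_2)$ is the set of simplices whose sign image reaches all of $U$ and is a $(d-3)$-manifold via the $\{f=0,g=0,h=0\}$ interpretation, and the two pieces are glued along it. The boundary identification you flag as the main obstacle is not carried out in the paper either (it is simply asserted when concluding that $H_1,H_2$ are manifolds with boundary), so your proposed link analysis would only add rigor beyond the paper's own treatment.
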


\paragraph{}
This works already in dimension $d=3$ and a complex function $\psi$. 
The set $\psi^{-1}((1+i,i-1,-1-i))$ consists of a finite set of path graphs.
Also $\psi^{-1}((1+i,-1+i,-1-i))$ consists of a set of path graphs. 
The union is a choice of $\{ \psi=0 \}$  and is a 1-manifold, a union of
cyclic graphs obtained by gluing the two parts along $0$-dimensional manifold,
a set of isolated points. 

\begin{figure}[!htpb]
\scalebox{0.6}{\includegraphics{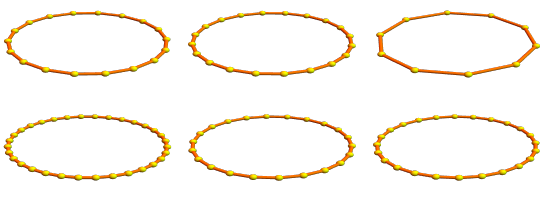}}
\scalebox{0.6}{\includegraphics{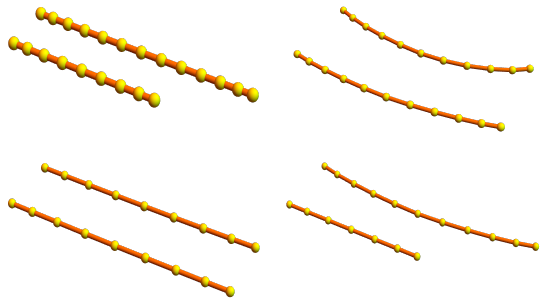}}
\label{flavor3d} 
\caption{
We first look at the level surfaces of a random complex-valued
function in a 3-dimensional manifold $G$. 
The second picture shows the manifolds $H$ with boundary 
obtained by just taking the inverse of a simplex.  }
\end{figure}

\paragraph{}
We can rephrase this that a continuous map from a $d$-manifold $G$ to $K_3$ has
$f^{-1}({1,2,3})$ as the empty set or an open set becomes in the Barycentric 
refinement a $(d-2)$-manifold with boundary.
One dimension lower, a continuous map from G to $K_2$ has
$f^{-1}(\{1,2\})$ either an empty set or a $(d-1)$-manifold with boundary.
For more on the Alexandrov topology for simplicial complexes, 
see \cite{KnillTopology2023}. This does not yet like being the most
general set-up even with 2 functions. We will try to extend this to general
simplicial maps from a $d$-manifold $M$ to a $3$-manifold with a hyper surface
$N$. The inverse $f^{-1}(N)$ appears then to be a $d-2$ manifold but
much larger in general than $M$. Every $y \in N$ has as an inverse image either
a $d-2$ manifold with boundary or a $d-3$ manifold gluing these $d-2$ patches
together to a $d-2$ manifold. 

\section{Arbitrary many functions}

\paragraph{}
{\bf Definition:} given $k$ functions $f_1,\dots, f_k$ in a $d$-manifold $G$, 
define $\{ f_1=0, \dots, f_k=0\}$ as the set of simplices $x$ in $G$ 
for which the image ${\rm sign}(f)(x)$ has at least $k+1$ elements and
such that it includes a fixed set of $k$ values like
for example $(-1,1,\dots, 1),(1,-1,\dots, 1),(1,1,\dots,-1)$. 

\paragraph{}
As in the case $k=2$, there is more than one surface. 
This is one of the ${\rm Binomial}(2^k,2)=2^{k-1}(2^k-1)$ 
different choices we can do. If $p=k$ is prime and $2^p-1$ are
prime (the later is then called a {\bf Mersenne prime}), 
then $n=2^{k-1}(2^k-1)$ is a {\bf perfect number} in the sense
of number theory: the sum of proper divisors of $n$ is equal to $n$. 
(One of the oldest problem in mathematics asks whether there are 
odd perfect numbers).  All these surfaces can differ for small 
$G$ or random $f$ but for large $G$ and smooth enough $f$,
all have the same topology. 

\begin{thm}
For any $d$-manifold $(V,E)$, and any set of functions $f_1,\dots, f_k$, 
and any $c \notin f(V)$, the level surface $\{ f = c\}$ is a $(d-k)$-manifold 
or empty. 
\end{thm}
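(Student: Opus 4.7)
The plan is to mirror the $k=1,2$ proofs by induction on the dimension $d$ (with $k$ fixed). Assume the theorem for all manifolds of dimension $<d$, and fix $k$ functions $f=(f_1,\ldots,f_k)$ on a $d$-manifold $G$. Let $K=\{f=c\}$ be viewed as a sub-graph of the Barycentric refinement $G_1$ and pick a vertex $x\in K$: a simplex of $G$ of some dimension $m$ with $k\le m\le d$, because $x$ must contain a $k$-simplex realising the $k$ prescribed sign values together with at least one further value. Decompose the unit sphere in $G_1$ via the Zykov join as $S_{G_1}(x)=S^-_G(x)\oplus S^+_G(x)$, where $S^-_G(x)$ is the Barycentric refinement of $\partial x$, an $(m-1)$-sphere, and the star $S^+_G(x)$ is a $(d-m-1)$-sphere.

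For the upward half, any simplex $y\supsetneq x$ still contains the witnessing $k$-simplex sitting inside $x$, so $y\in K$ automatically; hence $S^+_K(x)=S^+_G(x)$ is a $(d-m-1)$-sphere. For the downward half, $S^-_K(x)$ is exactly the level surface $\{f=c\}$ inside the $(m-1)$-sphere $\partial x$. Since $\dim\partial x=m-1<d$, the inductive hypothesis gives that $S^-_K(x)$ is an $(m-k-1)$-manifold; the extremal case $m=k$, where $S^-_K(x)=\emptyset$ because no proper face of a $k$-simplex can itself contain a $k$-simplex, fits the $(-1)$-sphere convention. The additional input needed is sphericity: $\partial x$ is the boundary of a simplex, and exactly as in the $k=2$ argument one uses that every level-set sub-manifold of a simplex boundary is in fact a sphere, so $S^-_K(x)$ is an $(m-k-1)$-sphere. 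Then $S_K(x)=S^-_K(x)\oplus S^+_K(x)$ is the join of an $(m-k-1)$-sphere and a $(d-m-1)$-sphere, hence a $(d-k-1)$-sphere, and $K$ is a $(d-k)$-manifold.

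The main obstacle is upgrading the inductive ``manifold'' output to ``sphere'' for $S^-_K(x)$. I would isolate this as a separate lemma and prove it by a secondary induction on $m$: using the iterated Zykov join $\partial\Delta^m = v_0\oplus v_1\oplus\cdots\oplus v_m$, verify that the sign-based level-set construction is compatible with joins, so that a level surface inside $\partial\Delta^m$ decomposes as an iterated join of level surfaces in smaller simplex boundaries, each a sphere by the smaller cases; a Zykov join of spheres is a sphere. A minor bookkeeping check is that the combinatorial data (the $k$ prescribed sign values plus the $k{+}1$-distinct-values condition) restricts correctly from $x$ to $\partial x$, which is immediate since every face of $x$ witnessing the sign condition is already a face of $\partial x$ whenever $m>k$.
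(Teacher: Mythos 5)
Your outer argument is exactly the paper's: decompose $S_{G_1}(x)=S^-_G(x)\oplus S^+_G(x)$, observe $S^+_K(x)=S^+_G(x)$, and reduce everything to the local claim that a level set inside the boundary sphere $S^-_G(x)$ of an $m$-simplex is a sphere of dimension $m-k-1$; your dimension bookkeeping for the join is correct, and you rightly flag this sphericity claim as the real content (the paper itself only asserts it, sketching $k\le 1$). The gap is in your proposed proof of that lemma. First, the identity $\partial\Delta^m=v_0\oplus v_1\oplus\cdots\oplus v_m$ is false: the Zykov join of the $m+1$ vertices is the complete graph $K_{m+1}$, i.e.\ the full simplex, which is contractible, not the $(m-1)$-sphere; the boundary complex of a simplex is not a join of points, nor a join of boundaries of complementary faces (for a partition $A\cup B$ of the vertices, $\partial\Delta_A\oplus\partial\Delta_B$ has dimension $m-2$, not $m-1$). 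Second, the sign-based level-set construction does not factor over joins in the way you want: if $f>0$ on every vertex of $A$ and $f<0$ on every vertex of $B$, then $\{f=0\}_A=\{f=0\}_B=\emptyset$, while $\{f=0\}_{A\oplus B}$ consists of all mixed simplices and is nonempty. So the secondary induction, as set up, cannot start.

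What actually drives the lemma is the partition of the vertices of $x$ by the sign vector ${\rm sign}(f)$, not a vertex-by-vertex join decomposition. For $k=1$, with positive set $P$ and negative set $N$, the faces of $x$ on which $f$ changes sign correspond bijectively to pairs (nonempty subset of $P$, nonempty subset of $N$), ordered componentwise; removing the top pair $(P,N)$ (which is $x$ itself) shows that $S^-_K(x)$ is the Barycentric refinement of the boundary complex of the product $\Delta_P\times\Delta_N$, hence an $(m-2)$-sphere --- topologically the join of the two boundary spheres $\partial\Delta_P$ and $\partial\Delta_N$, which is the precise meaning of the paper's phrase ``join of the sphere where $f$ is positive and the sphere where $f$ is negative''. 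A repaired version of your secondary induction should run over the $2^k$ sign classes and the chosen prescribed values (e.g.\ reducing $k$ by splitting along the sign of the last coordinate), not over the vertices of $\Delta^m$. Finally, your preliminary induction on $d$ giving that $S^-_K(x)$ is a manifold is redundant once the sphere lemma is available; the paper's proof consists solely of the reduction to that lemma.
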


\begin{proof}
The unit sphere $S_K(x) = S_K^-(x) \oplus S_K^+(x) = S_K^-(x) \oplus S_G^+(x)$
is a sphere if $S_K^-(x)$ is a sphere. But $S_K^-(x)$ is a level surface in 
the $(m-1)$-sphere $S_G^-(x)$. 
\end{proof} 

\paragraph{}
Morse-Sard boils down to a local statement about level surfaces in boundary spheres of simplices.
We only have to assure that the level surface $\{f = 0\}$ in such a sphere is a sphere. 
Let $x=(x_0,\dots,x_m)$ be a $m$-dimensional simplex and 
$f_1,\dots,f_k: x \to \{-1,1\}$. Assume $(f_1,\dots,f_k)$ takes at least $m+1$ values including
$(-1,1, \dots, 1), ( 1,-1, \dots, 1) , \dots (1,1, \dots, 1,-1)$. Then $\{ f =0 \}$ 
is a $(m-1-k)$-sphere.  If $k=0$, then $\{ f=0\}$ is the Barycentric refinement of the boundary 
sphere of $x$. If $k=1$, then $\{ f= 0 \}$ is the set of simplices on which $f$ changes sign.
This is always the join of two spheres, the sphere where $f$ is positive and the sphere, where
$f$ is negative, as long as there are two values.

\section{Integral geometry}

\paragraph{}
This section is a review of a result \cite{indexformula} formulated in 2012 
and \cite{eveneuler} in 2013, when
no discrete Sard was available yet. In \cite{poincarehopf} we gave the 
Poincar\'e-Hopf index $i_f(v) = 1-\chi(S^-_f(v))$ for a graph $G=(V,E)$ and coloring
$f$, where $S^-(v)$ is generated by the vertices $w$ of $S(v)$, where $f(w)<f(v)$.
Then $i_{-f}(v) = 1-\chi(S^+_f(v))$. The complement of the disjoint union 
$S^-(v) \cup S^+(v)$  in $S(v)$ is now the level set $\{ f = c \}$ with $c = f(w)$ 
in $S(v)$. Because the unit sphere $S(v)$ can be seen as a level set for 
the distance function to $v$, we actually deal with a contour surface 
of co-dimension $2$. If $G$ was a manifold, where $\chi(S(v))=(1+(-1)^{d-1}$, we got 
$j_f(v) = 1-\chi(\{ f=f(v) \}_{S_v})/2$ for even-dimensional manifolds and 
$j_f(v) = -\chi(\{ f=f(v) \}_{S_v})/2$ for odd-dimensional manifolds. 

\paragraph{}
The symmetric Poincar\'e-Hopf theorem $\chi(G) = \sum_{v \in V} j_f(v)$ 
therefore related the Euler characteristic $\chi(G)$ of an even-dimensional manifold $G$ 
equipped with a color function $f: G \to \mathbb{R}$ 
with the sum of the symmetric indices 
$j_f(v) = 1-\chi( \{ y \in S(v), f(y)=f(v) \})/2$. The {\bf curvature}
$K(v) = E_f[ j_f(v) ]$ was given as the expectation of
$1-E[ \{ f=f(x) \}]/2$ on the odd-dimensional unit sphere $S(x)$.
The probability space is a space of colorings on $G$:

\begin{thm}
For an even-dimensional manifold $K(x) = 1-{\rm E}[ \chi(\{ f=f(x) \})]/2$. \\
For an odd-dimensional manifold, $K(x) = -{\rm E}[ \chi(\{f = f(x) )\}]/2$. \\
\end{thm}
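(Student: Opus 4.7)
My plan is to deduce the two formulas as direct consequences of the symmetric Poincar\'e--Hopf identity together with an Euler-characteristic decomposition of the unit sphere. By definition $K(v) = \mathrm{E}_f[j_f(v)]$, where the symmetric index $j_f(v) = (i_f(v) + i_{-f}(v))/2$ is expressed through the two Poincar\'e--Hopf indices $i_f(v) = 1 - \chi(S^-_f(v))$ and $i_{-f}(v) = 1 - \chi(S^+_f(v))$. Averaging gives
\[
j_f(v) \;=\; 1 - \tfrac{1}{2}\bigl(\chi(S^-_f(v)) + \chi(S^+_f(v))\bigr),
\]
so once the sphere decomposition is in hand, linearity of expectation will finish the proof.

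The heart of the argument is the decomposition
\[
\chi(S^-_f(v)) + \chi(S^+_f(v)) \;=\; \chi(S(v)) + \chi(L),
\]
where $L = \{f = f(v)\}_{S(v)}$ is realised as a sub-graph of the Barycentric refinement $G_1$. My approach is the obvious simplex-counting one: every simplex of $S(v)$ belongs to exactly one of $S^-(v)$, $S^+(v)$, or $L$, depending on whether its vertices all lie strictly below $f(v)$, all lie strictly above, or straddle the value $f(v)$. Splitting the alternating sum over $S(v)$ into three pieces, the first two contribute $\chi(S^-(v))$ and $\chi(S^+(v))$ because they are sub-complexes, and one checks that the remaining contribution equals $\chi(L)$ in the graph realisation in $G_1$. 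Combined with the manifold fact $\chi(S(v)) = 1 + (-1)^{d-1}$, which is $0$ for even $d$ and $2$ for odd $d$, this yields $j_f(v) = 1 - \chi(L)/2$ in the even case and $j_f(v) = -\chi(L)/2$ in the odd case. Taking $\mathrm{E}_f$ term by term reproduces the two lines of the theorem.

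The main obstacle I anticipate is the sign bookkeeping in the decomposition identity: $L$ is not a sub-complex of $S(v)$ but only an open set in the Alexandrov topology, so its ``Euler characteristic'' admits two natural readings, namely the alternating sum over its simplices with the original dimensions in $S(v)$, and the topological Euler characteristic of its graph realisation in $G_1$. Reconciling these requires enumerating chains $x_0 \subsetneq \cdots \subsetneq x_k$ of simplices of $L$ and summing carefully over dimension; this is the only nontrivial step, and after it is settled, the rest of the proof is pure substitution followed by linearity of expectation.
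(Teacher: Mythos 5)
Your proposal follows essentially the same route as the paper, which in this section only reviews the 2012/2013 index-formula argument: write $j_f(v)=\bigl(i_f(v)+i_{-f}(v)\bigr)/2=1-\bigl(\chi(S^-_f(v))+\chi(S^+_f(v))\bigr)/2$, relate $\chi(S^-_f(v))+\chi(S^+_f(v))$ to $\chi(S(v))=1+(-1)^{d-1}$ and the level set $\{f=f(v)\}$ inside $S(v)$, and finish by linearity of expectation. One small caution: in your simplex count the straddling simplices contribute $-\chi(L)$, not $+\chi(L)$, to the alternating sum over $S(v)$ --- which is exactly what makes your displayed identity $\chi(S^-_f(v))+\chi(S^+_f(v))=\chi(S(v))+\chi(L)$ come out right; this is the reconciliation step you already flag as the main obstacle, and it is the same step the paper itself leaves to the cited earlier work rather than proving here.
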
 

\paragraph{}
This immediately implied: 

\begin{coro}
For an odd-dimensional manifold, the curvature is constant  $K(x)=0$. 
\end{coro}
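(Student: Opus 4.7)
The plan is to feed the preceding theorem back into itself, using the discrete Morse-Sard theorem to control the level sets that appear. By the theorem, for odd $d$ we have $K(x) = -{\rm E}[\chi(\{f = f(x)\})]/2$, and the level set $\{f = f(x)\}$ lives inside the unit sphere $S(x)$. Since $G$ is a $d$-manifold, $S(x)$ is a $(d-1)$-sphere, hence itself a manifold, so the discrete Morse-Sard theorem applied inside $S(x)$ yields that $\{f = f(x)\}$ is either empty or a $(d-2)$-manifold. When $d$ is odd, $d-2$ is also odd, so whenever the level set is non-empty it is itself an odd-dimensional manifold.

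The corollary therefore reduces to the auxiliary claim that every discrete odd-dimensional manifold has Euler characteristic zero. I would prove this claim by strong induction on the odd dimension $d$. The base case $d=1$ is immediate: a $1$-manifold is a disjoint union of cyclic graphs, each having equal numbers of vertices and edges, so $\chi = 0$. For the inductive step, assume the claim for all odd dimensions less than $d$, and let $M$ be a $d$-manifold equipped with a coloring $f$. The level set $\{f = f(x)\} \subset S(x)$ is, by the preceding paragraph, either empty or an odd-dimensional manifold of dimension $d-2 < d$; in both cases the inductive hypothesis forces $\chi(\{f=f(x)\}) = 0$. Consequently $K(x) = -{\rm E}[\chi(\{f=f(x)\})]/2 = 0$ at every vertex $x$, and the discrete Gauss-Bonnet identity $\chi(M) = \sum_{x \in V} K(x)$ recalled in the previous section closes the induction by giving $\chi(M) = 0$.

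Applying this auxiliary fact to the $(d-2)$-manifolds arising as level sets in $S(x)$ immediately yields ${\rm E}[\chi(\{f=f(x)\})] = 0$ and hence $K(x) = 0$, which is the corollary. The main obstacle is keeping the induction genuinely well-founded: one must check that the formula $K(x) = -{\rm E}[\chi(\{f=f(x)\})]/2$ from the previous theorem is legitimately applied with $S(x)$ treated as a stand-alone manifold, and that the Morse-Sard theorem works inside $S(x)$ rather than only inside the ambient $M$. Both are fine, because $S(x)$ is itself a manifold of strictly smaller dimension and the preceding results are stated for any manifold, but laying out the induction crisply is the one place where care is needed.
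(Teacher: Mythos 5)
Your argument is correct, and it is a legitimate way to get the corollary; it differs from the paper mainly in how the key auxiliary fact is handled. The paper states the corollary as an immediate consequence of the preceding theorem: since $f$ ranges over colorings, $f(x)\notin f(S(x))$, so by discrete Morse--Sard the level set $\{f=f(x)\}\subset S(x)$ is empty or a $(d-2)$-manifold, odd-dimensional when $d$ is odd, and the paper then simply invokes the (previously established) fact that odd-dimensional discrete manifolds have vanishing Euler characteristic -- in the introduction this is attributed to the Dehn--Sommerville symmetries, with index expectation mentioned as an alternative path. You instead prove that vanishing from scratch, by strong induction on the odd dimension: base case $d=1$ (disjoint unions of cycles), inductive step via the index/curvature formula $K(x)=-\mathrm{E}[\chi(\{f=f(x)\})]/2$ applied to the $(d-2)$-dimensional level sets, closed up with Gauss--Bonnet $\chi(M)=\sum_x K(x)$. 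This is essentially the ``index expectation'' route the paper alludes to, and it buys self-containedness: you do not need Dehn--Sommerville at all, only Morse--Sard, Poincar\'e--Hopf/Gauss--Bonnet, and the index formula, with the induction well-founded because dimension drops by $2$ at each step. Two small points worth making explicit: the application of Morse--Sard inside $S(x)$ with value $c=f(x)$ is legitimate precisely because the probability space consists of colorings, so $f(x)$ is not attained on $S(x)$; and the expectation is not really needed in the inductive step -- for any single coloring $f$ each symmetric index $j_f(x)=-\chi(\{f=f(x)\})/2$ already vanishes, so Poincar\'e--Hopf alone gives $\chi(M)=0$, after which $K(x)=\mathrm{E}[j_f(x)]=0$ follows for free.
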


\paragraph{}
We now can write Euler characteristic as $\chi(G) = 1-{\rm E}[X]/2$, where $X$
is the Euler characteristic of a ``random co-dimension $2$ surface. 
The probability space however is rather restricted. If we write it 
as a probability space of functions $\psi=f+ig$, we require
that for fixed $f$, we have $g(w)=1-1_v(w)$ and
that on the set $S(v) = \{ g=0 \}$ the function $f(v)$ is different
from all values on $S(v)$. It begs the question what happens
if we take the expectation of $\chi( \{ f+i g = 0 \} )$ when 
averaging over all possible functions $f+ig$ on the graph taking values 
in $\{ 1+i,1-i,-1+i,-1-i\}$. 

\section{Remarks}

\paragraph{}
The discrete Morse-Sard result illustrates
that moving from continuum geometries to finite quantum geometries becomes not only more 
regular but also more rich. While the translation of the objects from Euclidean manifolds 
to discrete manifolds is not obvious at first, the results are completely analog.
A point $x$ is a {\bf critical point} of a function $f$, if $M(x) = \{ f =f(x) \}$ 
in $S(x)$ is not a $(d-2)$-sphere. 
The symmetric Morse index $j(x) = ((1+(-1)^d)-\chi(M(x)))/2$ quantifies that. 
The sum $\sum_{x \in V_1} j(x)$ is the Euler characteristic of $G$. 
This is analog to the continuum. A smooth function $f$ has a critical
point $x$ on a smooth Riemannian d-manifold, if for small radius $r>0$, 
the set $M_r(x)=S_r(x) \cap \{f = f(x) \}$ is not a $(d-2)$-sphere. 
An example is a maximum or minimum of $f$, where $S(x) \cap \{ f = f(x)\}$ 
is empty. 

\paragraph{}
Given two functions $f,g$ on a $d$-manifold we can look at the {\bf discrete Lagrange problem}
where $f$ is given on the {\bf constraint} $\{ g=0 \}$. We would like to find
the extrema of $f$ on the level surface using Lagrange equations similarly as in the continuum. 
Critical points should be simplices in $g=0$, where the center manifold $\{ f= f(x) \}$ in $S(x)$ 
is not a $(d-3)$-sphere. For example, on a $2$-manifold, then $g=0$ is a $1$-manifold and $f$ 
can be extremized at points $y$ on this manifold $\{ g = 0 \}$, where $S(y)$ is
empty. This is what happens for a coloring $f$ on a $1$-dimensional manifold? A point 
$x$ could be called critical point if $\{ f=f(x) \}$ is not a zero sphere and
a regular point if $\{ f = f(x)$ is a $0$-sphere. 

\paragraph{}
Let us look at a triangle $x$ and two functions $f,g$
such that $f,g$ both change sign on $x$. Define the {\bf gradient}
of $f$ as the structure of the signs of $f$. The tail of the arrow
is the set where $f$ is negative. The head is the arrow where
$f$ is positive. For a simplex $x=(a,b,c)$ in the intersection $\{f =c,g =d\}$
the values $(f(a),f(b),f(c))$ and $(g(a),g(b),g(c))$ must have the properties
that the gradients have the same type but are not the same. The two possible
cases (up to rotation) are $(1,-1,-1),(-1,1,-1)$ and $(1,-1,1),(-1,1,1)$. 
If the sign structure is $(1,1,-1),(-1,-1,1)$ for example, we do not intersect
even so both functions change sign on $x$. 

\paragraph{}
Instead of starting with a graph, we could have begun with a finite abstract
simplicial complex $G$, a finite set of sets closed under the operation of taking finite 
non-empty subsets. It carries a topology
generated by the stars $U(x)=\{ y \in G, x \subset y\}$ which are the smallest 
open sets. The closed sets are then the sub-complexes. The boundary $S(x)$ of 
$U(x)$ is the unit sphere. As it is $S(x)=\overline{U}(x) \setminus U(x)$
it is closed as the complement of an open set in a closed set. A complex is a {\bf $d$-manifold}
if all unit spheres $S(x)$ are $(d-1)$-spheres. A $d$-manifold is a {\bf $d$-sphere} if there exists
$x$ such that the complex $G \setminus U(x)$ is contractible. A complex $G$ is contractible if
there exists $x$ such that both the complexes $S(x)$ and $G \setminus U(x)$ are contractible.  
These inductions start with $0=\{\}$ being the $(-1)$-sphere and $1=\{1\}$ being contractible. 

\paragraph{}
Here is an application of co-dimension $2$ manifolds. 
We can visualizations of eigenfunctions of Laplacians on 3-manifolds $\{ f =0 \}$ in 3-manifolds. 
Let $H$ be a $3$-manifold $H=\{f =0 \}$ in a 4-manifold $G$. We can now 
look at the Laplacian $L$ on $H$ and an eigenvector $f$ to an eigenvalue $\lambda$. 
Then $f$ is a real-valued function on $H$. For every original vertex $v$ of $G$ which
happens to be in at least one vertex $x$ of $H$ (so that $x$ is a simplex in $G$), 
then define the function $g(v)=\sum_{x, v \in x} \psi(v)$. 
Now, we can look at the manifold $\{ f=0, g=0  \}$ which is a nodal surface of the Laplacian. 
In other words, we can study {\bf Cladni surfaces} for Laplacians on surfaces. 

\pagebreak

\section{Code}

\paragraph{}
The following self-contained Mathematica code allows to compute level surfaces of real or complex
functions of arbitrary simplicial complexes and compute its cohomology (as an open set).
The closed manifold is obtained by adjusting the dimension. 
We encode a delta set as $(G,D,b)$, where $G$ is a set of sets $G$, $D$ is a Dirac matrix
and $b$ a dimension function. Two 3-manifolds $\mathbb{RP}^3$ and the homology sphere 
are from \cite{Manifoldpage}. 

\begin{tiny} \lstset{language=Mathematica} \lstset{frameround=fttt}
\begin{lstlisting}[frame=single]
Generate[A_]:=Sort[Delete[Union[Sort[Flatten[Map[Subsets,A],1]]],1]];
Whitney[s_]:=Generate[FindClique[s, Infinity, All]];  sig[x_]:=Signature[x]; L=Length; 
F[G_]:=Module[{l=Map[L,G]},If[G=={},{},Table[Sum[If[l[[j]]==k,1,0],{j,L[l]}],{k,Max[l]}]]]; 
sig[x_,y_]:=If[SubsetQ[x,y]&&(L[x]==L[y]+1),sig[Prepend[y,Complement[x,y][[1]]]]*sig[x],0];
nu[A_]:=If[A=={},0,L[NullSpace[A]]];
Dirac[G_]:=Module[{f=F[G],b,d,n=L[G]},b=Prepend[Table[Sum[f[[l]],{l,k}],{k,L[f]}],0];
 d=Table[sig[G[[i]],G[[j]]],{i,n},{j,n}]; {d+Transpose[d],b}];
Beltrami[G_]:= Module[{B=Dirac[G][[1]]},B.B];
Hodge[G_]:=Module[{Q,b,H},{Q,b}=Dirac[G];H=Q.Q;Table[Table[H[[b[[k]]+i,b[[k]]+j]],
 {i,b[[k+1]]-b[[k]]},{j,b[[k+1]]-b[[k]]}],{k,L[b]-1}]];
Betti[s_]:=Module[{G},If[GraphQ[s],G=Whitney[s],G=s];Map[nu,Hodge[G]]];
Fvector[s_]:=Module[{G},If[GraphQ[s],G=Whitney[s],G=s];Delete[BinCounts[Map[L,G]],1]];
ToGraph[G_]:=UndirectedGraph[n=L[G];Graph[Range[n],
  Select[Flatten[Table[k->l,{k,n},{l,k+1,n}],1],(SubsetQ[G[[#[[2]]]],G[[#[[1]]]]])&]]];
Barycentric[s_]:=If[GraphQ[s],ToGraph[Whitney[s]],Whitney[ToGraph[s]]];
Suspension[s_] :=Module[{v=VertexList[s],e=EdgeRules[s],m},m=Max[v];
 Do[e=Union[e,{v[[k]]->m+1,v[[k]]->m+2}],{k,L[v]}];UndirectedGraph[Graph[e]]];
USphere[s_,v_]:=VertexDelete[NeighborhoodGraph[s,v],v];   Rnd[x_]:=x -> RandomChoice[{-1,1}];
RFunction[s_]:=Module[{G},If[GraphQ[s],G=Whitney[s],G=s];Map[Rnd,Union[Flatten[G]]]];
LSurface[s_,f_]:=Module[{G,H={},K},If[GraphQ[s],K=Whitney[s],K=s];G=Select[K,L[#1]>1 &];
  Ch[b_]:=Module[{},A=Union[Table[Sign[b[[k]]],{k,L[b]}]];SubsetQ[A,{1,-1}]&&L[A]>=2];
  Do[If[Ch[G[[k]] /. f],H=Append[H,G[[k]]]],{k,L[G]}]; H];
LSurface[s_,{f_,g_}]:=Module[{G,H={},K},If[GraphQ[s],K=Whitney[s],K=s];G=Select[K,L[#1]>2 &];
  Ch[b_,d_]:=Module[{},A=Union[Table[Sign[{b[[k]],d[[k]]}],{k,L[b]}]];
     MemberQ[A,{1,-1}] && MemberQ[A,{-1,1}] && L[A]>=3]; 
  Do[If[Ch[G[[k]] /. f,G[[k]] /. g],H=Append[H,G[[k]]]],{k,L[G]}];H];

G = Whitney[Suspension[Suspension[PolyhedronData["Icosahedron", "Skeleton"]]]];
f=RFunction[G];g=RFunction[G]; gm=Table[g[[k,1]]->-g[[k,2]],{k,L[g]}];
H1=LSurface[G,{f,g}];H2=LSurface[G,{f,gm}]; 
Print[{Betti[H1],Betti[H2]}]; {GraphPlot3D[ToGraph[H1]],GraphPlot3D[ToGraph[H2]]}

RP3=Generate[{{1,2,3,4},{1,2,3,5},{1,2,4,6},{1,2,5,6},{1,3,4,7},{1,3,5,7},{1,4,6,7},
{1,5,6,7},{2,3,4,8},{2,3,5,9},{2,3,8,9},{2,4,6,10},{2,4,8,10},{2,5,6,11},{2,5,9,11},
{2,6,10,11},{2,7,8,9},{2,7,8,10},{2,7,9,11},{2,7,10,11}, {3,4,7,11},{3,4,8,11},
{3,5,7,10},{3,5,9,10},{3,6,8,9},{3,6,8,11},{3,6,9,10}, {3,6,10,11},{3,7,10,11},
{4,5,8,10},{4,5,8,11},{4,5,9,10},{4,5,9,11}, {4,6,7,9},{4,6,9,10},{4,7,9,11},{5,6,7,8},
{5,6,8,11},{5,7,8,10},{6,7,8,9}}];f=RFunction[RP3];H=LSurface[RP3,f];U=GraphPlot3D[ToGraph[H]] 

G=Generate[{{0,1,5,11},{0,1,5,17},{0,1,7,13},{0,1,7,19},{0,1,9,15},{0,1,9,21},{0,1,11,13},
{0,1,15,19},{0,1,17,21},{0,2,6,12},{0,2,6,18},{0,2,7,13},{0,2,7,19},{0,2,10,16},{0,2,10,22},
{0,2,12,13},{0,2,16,19},{0,2,18,22},{0,3,8,14},{0,3,8,20},{0,3,9,15},{0,3,9,21},{0,3,10,16},
{0,3,10,22},{0,3,14,22},{0,3,15,16},{0,3,20,21},{0,4,5,11},{0,4,5,17},{0,4,6,12},{0,4,6,18},
{0,4,8,14},{0,4,8,20},{0,4,11,12},{0,4,14,18},{0,4,17,20},{0,11,12,13},{0,14,18,22},
{0,15,16,19},{0,17,20,21},{1,2,5,11},{1,2,5,17},{1,2,8,14},{1,2,8,20},{1,2,10,16},{1,2,10,22},
{1,2,11,14},{1,2,16,20},{1,2,17,22},{1,3,6,12},{1,3,6,18},{1,3,7,13},{1,3,7,19},{1,3,8,14},
{1,3,8,20},{1,3,12,20},{1,3,13,14},{1,3,18,19},{1,4,6,12},{1,4,6,18},{1,4,9,15},{1,4,9,21},
{1,4,10,16},{1,4,10,22},{1,4,12,16},{1,4,15,18},{1,4,21,22},{1,11,13,14},{1,12,16,20},
{1,15,18,19},{1,17,21,22},{2,3,5,11},{2,3,5,17},{2,3,6,12},{2,3,6,18},{2,3,9,15},{2,3,9,21},
{2,3,11,15},{2,3,12,21},{2,3,17,18},{2,4,7,13},{2,4,7,19},{2,4,8,14},{2,4,8,20},{2,4,9,15},
{2,4,9,21},{2,4,13,21},{2,4,14,15},{2,4,19,20},{2,11,14,15},{2,12,13,21},{2,16,19,20},
{2,17,18,22},{3,4,5,11},{3,4,5,17},{3,4,7,13},{3,4,7,19},{3,4,10,16},{3,4,10,22},{3,4,11,16},
{3,4,13,22},{3,4,17,19},{3,11,15,16},{3,12,20,21},{3,13,14,22},{3,17,18,19},{4,11,12,16},
{4,13,21,22},{4,14,15,18},{4,17,19,20},{11,12,13,23},{11,12,16,23},{11,13,14,23},{11,14,15,23},
{11,15,16,23},{12,13,21,23},{12,16,20,23},{12,20,21,23},{13,14,22,23},{13,21,22,23},
{14,15,18,23},{14,18,22,23},{15,16,19,23},{15,18,19,23},{16,19,20,23},{17,18,19,23},
{17,18,22,23},{17,19,20,23},{17,20,21,23},{17,21,22,23}}];f=RFunction[G];H=LSurface[G,f];
Print[Betti[H]]; GraphPlot3D[ToGraph[G]] 
\end{lstlisting}
\end{tiny}

\bibliographystyle{plain}

\end{document}